\newtheorem{tw}{Theorem}
\newtheorem*{twn}{Theorem}
\newtheorem{lem}[tw]{Lemma}
\theoremstyle{definition}
\newtheorem{df}[tw]{Definition}
\newtheorem{ex}[tw]{Example}
\def\C{\mathbb{C}}
\def\O{\mathcal{O}}
\def\D{\mathbb{D}}
\def\phi{\varphi}
\def\la{\lambda}
\def\lf{l}
\begin{document}

\title[Azukawa izometries and biholomorphisms]{A note on relation between
Azukawa izometries at one point and global biholomorphisms}
\author[M. Zaj\k{e}cka]{Ma{\l}gorzata Zaj\k{e}cka}
\address{\textnormal{Jagiellonian University\newline
\indent Faculty of Mathematics and Computer Science\newline
\indent Institute of Mathematics\newline
\indent {\L}ojasiewicza 6\newline
\indent 30-348 Krak\'ow}}

\address{\textnormal{Cracow University of Technology\newline
\indent Faculty of Physics, Mathematics and Computer Science\newline
\indent Institute of Mathematics\newline
\indent Warszawska 24\newline
\indent 31-155 Krak\'ow\newline
$ $}}

\email{malgorzata.zajecka@gmail.com}
\keywords{}
\subjclass[2010]{}
\begin{abstract} 
We prove that under certain assumptions holomorphic functions which are Azukawa
isometries at one point are in fact biholomorphisms.
\end{abstract}
\maketitle

\section{Introduction}

The holomorphic contractibility of Carath\'eodory-Reiffen and
Kobayashi-Royden pseudometrics have put much interest in the relation of
global biholomorphicity and Carath\'eodory or Kobayashi isometricity at one
point. While from mentioned property it immediately follows that a biholomorphism must
be a Carath\'eodory and Kobayashi isometry, the oposite statement is obviously
not true in general case. In 1984 Jean-Pierre Vigu\'e proved the following
result.

\begin{twn}[see \cite{Vig}]
Let $\Omega$ be a bounded convex domain in $\C^n$ and let $M$ be a complex
manifold on which a Carath\'eodory-Reiffen pseudodistance is a distance. Suppose
$F:\Omega\to M$ is a holomorphic mapping which is a Carath\'eodory-Reiffen
isometry at a point $p\in\Omega$. Then $F$ is a biholomorphism.
\end{twn}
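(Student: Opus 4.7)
The plan is to exploit Lempert's theory of complex Carath\'eodory geodesics in bounded convex domains. Since $\Omega$ is bounded and convex, for every tangent direction $X\in T_p\Omega$ I can produce an extremal holomorphic disc $\varphi\colon \D\to\Omega$ with $\varphi(0)=p$ and $\varphi'(0)=X/\gamma_\Omega(p;X)$, together with a left inverse $f\in\O(\Omega,\D)$ satisfying $f\circ\varphi=\mathrm{id}_\D$. Such an $f$ realises the supremum defining $\gamma_\Omega(p;\varphi'(0))$, and by Lempert's uniqueness any other extremal must coincide with $f$ up to post-composition with a M\"obius automorphism of $\D$ fixing $0$. This rigidity is the decisive technical lever.

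The core step is then to verify that $F\circ\varphi\colon \D\to M$ is itself a complex geodesic in $M$. Holomorphic contractibility immediately gives $c_M\bigl((F\circ\varphi)(z),(F\circ\varphi)(w)\bigr)\le\rho(z,w)$ for all $z,w\in\D$, where $\rho$ is the Poincar\'e distance. For the reverse inequality I would take a sequence $g_k\in\O(M,\D)$ with $g_k(F(p))=0$ whose derivatives at $F(p)$ in the direction $dF(p)\varphi'(0)$ tend to $\gamma_M\bigl(F(p);dF(p)\varphi'(0)\bigr)$, which equals $\gamma_\Omega(p;\varphi'(0))=1$ by the one-point isometry hypothesis. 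The pullbacks $g_k\circ F\colon\Omega\to\D$ are then near-extremals for $\gamma_\Omega$ at $(p;\varphi'(0))$, so a Montel normal-family argument combined with Lempert uniqueness forces $g_k\circ F\to f$ in the compact-open topology, possibly after post-composing each $g_k$ with a suitable M\"obius transformation. Hence $g_k\circ F\circ\varphi\to\mathrm{id}_\D$, which drives $\rho\bigl(g_k(F\varphi(z)),g_k(F\varphi(w))\bigr)\to\rho(z,w)$ and supplies the needed lower bound.

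Once every $F\circ\varphi$ through $p$ is a $c_M$-geodesic, and since every $q\in\Omega$ lies on such a geodesic (again by Lempert), I obtain $c_M(F(p),F(q))=c_\Omega(p,q)$ for all $q$; the hypothesis that $c_M$ is a distance then gives $F(q)\neq F(p)$ whenever $q\neq p$. By repeating the argument with a moving base point running along these geodesics I would extend the one-point isometry to a genuine Carath\'eodory isometry $F\colon(\Omega,c_\Omega)\to(M,c_M)$ and, in parallel, promote the infinitesimal isometry from $p$ to every point of $\Omega$. Combined with injectivity of each $dF(q)$ (immediate from infinitesimal isometry plus positive-definiteness of $\gamma_\Omega$) and the inverse function theorem, this produces the claimed global biholomorphism of $\Omega$ onto $F(\Omega)$.

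The main obstacle is the second paragraph: propagating a single infinitesimal identity at $(p;X)$ into the statement that $g_k\circ F$ converges to a specific Lempert extremal on all of $\Omega$. Without the convexity of $\Omega$ and the corresponding Lempert uniqueness, the sequence $g_k\circ F$ need not have any definite limit, and the argument collapses. Everything after that step is essentially bookkeeping around this kernel.
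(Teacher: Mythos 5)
This statement is quoted in the paper only as background, with a pointer to Vigu\'e's article; the paper contains no proof of it, so there is nothing internal to compare your argument with, and I can only judge the proposal on its own terms. The opening is right: Lempert's \emph{existence} theorem for bounded convex domains does supply, for every direction and for every point $q$, a complex geodesic $\varphi$ through $p$ with a holomorphic left inverse, and the idea of showing that $F\circ\varphi$ is again a complex geodesic of $M$ is the correct first move. But the ``Lempert uniqueness'' that you describe as the decisive technical lever is false for merely convex bounded domains: in the bidisc the extremal discs through the origin tangent to $(1,0)$ include every $\la\mapsto(\la,h(\la))$ with $h\in\O(\D,\D)$, $h(0)=h'(0)=0$, and the extremal functions are likewise non-unique. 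Uniqueness requires strict convexity (which is why it appears in Graham's theorem, not here). Fortunately you do not need it for the conclusion of your second paragraph: a Montel limit $g$ of (a subsequence of) the $g_k$ satisfies $g(F(p))=0$ and $|(g\circ F\circ\varphi)'(0)|=\gamma_\Omega(p;\varphi'(0))=1$, so the classical Schwarz lemma already forces $g\circ F\circ\varphi$ to be a rotation, hence $F\circ\varphi$ is a $c_M$-geodesic; no identification of the limit with a prescribed extremal $f$ is required.

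The serious gap is the third paragraph. What the geodesic argument actually yields is $c_M(F(p),F(q))=c_\Omega(p,q)$ for all $q$ (so $F$ separates $p$ from every other point) and the infinitesimal isometry at a point $q=\varphi(\sigma)$ only in the single direction $\varphi'(\sigma)$ tangent to the geodesic joining $p$ to $q$. Your plan to ``repeat the argument with a moving base point'' is circular: to run the same argument based at $q$ you need the infinitesimal isometry at $q$ in \emph{all} directions, which is precisely what you are trying to promote. As a result, neither injectivity of $F$ on pairs of points both different from $p$, nor injectivity of $dF(q)$ for $q\neq p$, is actually established, and the final appeal to the inverse function theorem has nothing to act on. Closing this gap is the real content of Vigu\'e's theorem; the standard route is not to propagate the isometry pointwise but to assemble from the extremal functions $g$ and the geodesics $\varphi$ a holomorphic left inverse of $F$ (here the convexity of $\Omega$ is used again, to combine the retractions $\varphi\circ g\circ F$ into a single map into $\Omega$), from which injectivity and biholomorphy onto the image follow at once. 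As written, your argument proves only that $F$ is injective at $p$ in the sense $F(q)\neq F(p)$ for $q\neq p$, and that $dF(p)$ is injective.
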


A few years later Ian Graham proved analogous theorem for a Kobayashi-Royden
isometry.

\begin{twn}[see \cite{Gra}]
Suppose $M$ is a taut complex manifold of dimension $n$. Suppose $\Omega$ is a
strictly convex bounded domain in $\C^n$. Suppose $F:M\to\Omega$ is a
holomorphic mapping which is a Kobayashi-Royden isometry at a point $p\in M$.
Then $F$ is a biholomorphism.
\end{twn}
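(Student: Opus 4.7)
My approach combines Lempert's theory of complex geodesics on strictly convex bounded domains with the existence of Kobayashi extremal disks guaranteed by tautness of $M$, the goal being to construct a holomorphic inverse of $F$. As a first step I would observe that $dF_p$ is a linear isomorphism: tautness of $M$ implies hyperbolicity, so $\kappa_M(p;v)>0$ for every $v\in T_pM\setminus\{0\}$; the isometry identity
\[
\kappa_\Omega(F(p);dF_p(v))=\kappa_M(p;v),\qquad v\in T_pM,
\]
together with the non-degeneracy of $\kappa_\Omega(F(p);\cdot)$ on the bounded domain $\Omega$, then forces $dF_p$ to be injective, hence (by equality of dimensions) an isomorphism.

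Next, for each non-zero $v\in T_pM$ I would use tautness to extract a Kobayashi extremal disk $\psi_v:\mathbb{D}\to M$ with $\psi_v(0)=p$ and $\psi_v'(0)=v/\kappa_M(p;v)$. The composition $F\circ\psi_v:\mathbb{D}\to\Omega$ sends $0$ to $q:=F(p)$ with derivative $dF_p(v)/\kappa_\Omega(q;dF_p(v))$ at the origin, and the isometry condition combined with the Schwarz--Pick contraction property forces it to be an extremal disk at $q$ in the direction $dF_p(v)$. Lempert's theorem then identifies this disk as a complex geodesic of $\Omega$ and supplies a holomorphic left inverse (retraction) $r_v:\Omega\to\mathbb{D}$ with $r_v\circ F\circ\psi_v=\mathrm{id}_{\mathbb{D}}$. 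Because complex geodesics of a strictly convex bounded domain are uniquely determined by a point and a tangent direction there, every complex geodesic through $q$ is obtained in this way.

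I would then define a candidate inverse $G:\Omega\to M$ as follows. Given $x\in\Omega$, let $\phi_x:\mathbb{D}\to\Omega$ be the unique complex geodesic with $\phi_x(0)=q$ and $\phi_x(t_x)=x$ for some $t_x\in\mathbb{D}$. Transporting its initial direction $\phi_x'(0)\in T_q\Omega$ back by $(dF_p)^{-1}$ selects $v(x)\in T_pM$; matching derivatives at $0$ together with Lempert uniqueness yield $F\circ\psi_{v(x)}=\phi_x$, so that $x=F(\psi_{v(x)}(t_x))$, and I set $G(x):=\psi_{v(x)}(t_x)$. The relation $t_x=r_{v(x)}(x)$ is the crucial link that expresses $G$ in terms of holomorphic data and is the starting point for proving $G$ is holomorphic.

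The main obstacle I expect will be this final step: showing that $G$ is globally well defined and holomorphic on all of $\Omega$, and that $G\circ F=\mathrm{id}_M$ on all of $M$. Holomorphy of $G$ relies on the holomorphic dependence of Lempert geodesics on their initial data, a deep ingredient of Lempert's work; the equation $G\circ F=\mathrm{id}_M$ holds along every extremal disk $\psi_v$ through $p$ by construction and hence in a neighbourhood of $p$, and then tautness (so that normal limits of extremal disks remain extremal, keeping $F$ a local biholomorphism along them) together with an identity-principle argument is required to propagate the equality to all of $M$.
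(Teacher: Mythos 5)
The paper does not prove this statement: it is quoted as background and attributed to Graham \cite{Gra}, so the only comparison available is with Graham's published argument and with standard practice. The first half of your proposal is sound and follows that argument faithfully: tautness makes $\kappa_M(p;\cdot)$ positive on nonzero vectors, so the infinitesimal isometry forces $dF_p$ to be an isomorphism; tautness supplies extremal disks $\psi_v$, and the isometry together with the contraction property shows that $F\circ\psi_v$ is extremal in $\Omega$, hence by Lempert's theory the unique complex geodesic through $q=F(p)$ in the direction $dF_p(v)$, equipped with a holomorphic left inverse $r_v$.

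The gap is in the endgame. Holomorphy of $G$ cannot be obtained from ``holomorphic dependence of Lempert geodesics on their initial data,'' because that dependence is not holomorphic: already in the unit ball the geodesic through $0$ and $x$ is $\la\mapsto\la x/\|x\|$ with $t_x=\|x\|$, so both $v(x)$ and $t_x$ vary only real-analytically in $x$. Thus the formula $G(x)=\psi_{v(x)}(t_x)$ yields a priori at best a continuous map, and your entire closing chain ($G$ open, $G\circ F=\mathrm{id}$ near $p$, identity principle) rests on the one property you have not established. The standard repair is to avoid constructing $G$ at all: prove that $F$ is bijective and then invoke the fact that an injective holomorphic map between equidimensional complex manifolds is a biholomorphism onto its image. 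Surjectivity is easy from your setup (every point of $\Omega$ lies on a geodesic through $q$, and every such geodesic is some $F\circ\psi_v$ because $dF_p$ is onto). Injectivity is the genuinely hard step and is where the retractions $r_v$ must actually be used: one has to show $F^{-1}\bigl((F\circ\psi_v)(\D)\bigr)=\psi_v(\D)$, for instance by analyzing the holomorphic self-map $\psi_v\circ r_v\circ F$ of the taut manifold $M$, which fixes $p$ and has projection-like differential there, via a Cartan-type argument; your proposal does not address this, and the phrase ``holds along every extremal disk by construction'' only covers the one chosen disk per direction, not all preimages. Note that the paper's own main theorem resolves the analogous endgame by yet another device --- properness of $F$ together with Rudin's structure theorem for proper holomorphic maps --- which likewise sidesteps any direct construction of the inverse; some such substitute is needed here.
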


In this paper we switch our interest to other holomorphically contractible
pseudometric: the Azukawa pseudometric $A_G$. We obtain the following main
result.

\begin{tw}\label{tw1} 
Let $G_1, G_2\subset\C^n$ be domains. Let $a\in G_1$ and let
$F:G_1\to G_2$ be such that:
\begin{enumerate}[\rm (1)]
  \item $F\in\O(G_1,G_2)$;
  \item $G_1$ is taut;
  \item $G_2$ is bounded;
  \item for any $z\in G_1$ we have $g_{G_1}(a,z)=\lf_{G_1}^*(a,z)$;
  \item for any $X\in\C^n$ we have $A_{G_2}(F(a);F'(a)X)=A_{G_1}(a;X)$.
\end{enumerate}
Then $F$ is a biholomorphism.
\end{tw}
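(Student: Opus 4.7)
I would convert the infinitesimal Azukawa isometry at $a$ into a Kobayashi--Royden isometry (using (4)), propagate it to the level of Green functions along extremal analytic discs from $a$, and then exploit properness. For the first step, the derivative representations $A_G(p;X)=\limsup_{\la\to 0}g_G(p,p+\la X)/|\la|$ and $\kappa_G(p;X)=\limsup_{\la\to 0}\lf_G^*(p,p+\la X)/|\la|$, combined with (4), give $A_{G_1}(a;\cdot)=\kappa_{G_1}(a;\cdot)$. Combining this with (5), the contractibility of $\kappa$, and the universal inequality $A\le\kappa$:
\[
\kappa_{G_1}(a;X)=A_{G_1}(a;X)=A_{G_2}(F(a);F'(a)X)\le \kappa_{G_2}(F(a);F'(a)X)\le \kappa_{G_1}(a;X),
\]
so equality holds throughout. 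Since $G_1$ is taut (hence hyperbolic), $\kappa_{G_1}(a;\cdot)>0$ off $0$, so $F'(a)$ is invertible.

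The second step is the identity $g_{G_2}(F(a),F(z))=g_{G_1}(a,z)$ for every $z\in G_1$. Fix $z\ne a$: by tautness of $G_1$ together with (4), pick an extremal disc $\phi\in\O(\D,G_1)$ with $\phi(0)=a$, $\phi(\sigma)=z$, $|\sigma|=g_{G_1}(a,z)$. The map $\la\mapsto\log g_{G_1}(a,\phi(\la))-\log|\la|$ is subharmonic on $\D^*$, bounded above by $0$ by contractibility of $g$, extends subharmonically across $0$, and vanishes at $\la=\sigma$; by the maximum principle it vanishes on all of $\D$. In particular $A_{G_1}(a;\phi'(0))=\lim_{\la\to 0}g_{G_1}(a,\phi(\la))/|\la|=1$, and by (5) also $A_{G_2}(F(a);F'(a)\phi'(0))=1$. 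Running the analogous subharmonic maximum-principle argument on $\psi:=F\circ\phi\colon\D\to G_2$ then gives $g_{G_2}(F(a),\psi(\la))=|\la|$ on $\D$, and evaluating at $\la=\sigma$ yields the identity.

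The third step deduces the biholomorphism. From the identity, $F(z)=F(a)$ forces $g_{G_1}(a,z)=0$, so $z=a$ by hyperbolicity; thus $F^{-1}(F(a))=\{a\}$. If $z_k\to\partial G_1$, then tautness of $G_1$ gives $\lf_{G_1}^*(a,z_k)\to 1$ by a standard normal-families argument, hence $g_{G_2}(F(a),F(z_k))=g_{G_1}(a,z_k)\to 1$, which forces $F(z_k)$ out of every compact subset of $G_2$. Thus $F$ is proper; by Remmert's proper mapping theorem it is a finite branched covering, and since $a$ is a regular point and the unique preimage of $F(a)$, the covering degree is $1$, so $F$ is a biholomorphism. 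I expect the main obstacle to sit in the second step: specifically, identifying $\lim_{\la\to 0}g_{G_2}(F(a),\psi(\la))/|\la|$ with $A_{G_2}(F(a);\psi'(0))$, which requires passing from the straight-line definition of the Azukawa metric to limits along an arbitrary holomorphic disc, and is where hypothesis (5) enters decisively.
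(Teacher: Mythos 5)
Your proposal is correct and follows essentially the same route as the paper: Pang's identification of the Kobayashi--Royden metric as the derivative of the Lempert function (via hypothesis (4)) to make $F'(a)$ invertible, the extremal-disc/maximum-principle argument (the paper's Lemma \ref{l1}, whose $(ii)\Leftrightarrow(iii)$ direction you rightly flag as the technical crux) to propagate the isometry to the identity $g_{G_2}(F(a),F(z))=g_{G_1}(a,z)$, and then properness plus a degree count. The only cosmetic difference is the final step, where you conclude the covering degree is $1$ from $F^{-1}(F(a))=\{a\}$ with $a$ a regular point, while the paper concludes it from $F$ being biholomorphic between small Green-function sublevel sets; both are valid.
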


Proof of this result is given in the last section of this paper.

The following example shows essential difference between our result and theorems
of Vigue and Graham.

\begin{ex}
Observe that if $G_1$ is a taut balanced pseudoconvex domain, then assumption
(4) in Theorem \ref{tw1} is satisfied (see \cite{JarPfl},
Proposition 4.2.7). However such domains do not need to be convex, thus in
many cases we cannot apply Vigue's nor Graham's theorems with $G_1$ in the role
of $\Omega$. A good example of that situation is a set $G_1:=\{(z,w)\in\C^n:\
|z|<1,\ |w|<1,\ |zw|<\alpha\}$, for fixed $0<\alpha<1$.
\end{ex}

\section{Preliminaries}

In this section we remind the definitions and basic properties of the Green
function and Azukawa pseudometric. For the detailed proofs, more interesting
facts about these objects and their connections to Kobayashi and Carath\'eodory
pseudodistances and pseudometrics see for instance \cite{JarPfl}, \cite{Kli 1},
\cite{Kli 2}, \cite{Azu}, and \cite{Kli 3}.

Let $G$ be a
domain in $\C^n$. To simplify the definitions, for $a\in G$ let
$\exp\mathcal{L}_a$ denote the family of functions $u:G\to[0,1)$ such that $\log
u$ is plurisubharmonic on $G$ and there exists a positive constant $M$ such that
$u(z)\leq M\|z -a\|$, $z\in G$.

\begin{df}
Define
$$
g_G(a,z):=\sup\{u(z):\ u\in\exp\mathcal{L}_a\},\ a\in G,\ z\in G;$$
$$
A_G(a;X):=\sup\{\limsup_{0\neq\la\to 0}\frac{u(a+\la X)}{|\la|}:\
u\in\exp\mathcal{L}_a\},\ a\in G,\ X\in\C^n.$$
The function $g_G$ is called \emph{a pluricomplex Green function with a pole at
point a} and $A_G$ is called \emph{an Azukawa pseudometric}.
\end{df}

Both $g_G$ and $A_G$ are holomorphically contractible, i.e. for a domain
$D\subset\C^m$ and a holomorphic function $F:G\to D$ we have
$$g_D(F(a),F(z))\leq g_G(a,z),\ a,z\in G,$$ and $$A_D(F(a);F'(a)X)\leq
A_G(a;X),\ a\in G,\ X\in\C^n.$$Obviously if $F$ is a biholomorphism, we get
equalities - this property is called biholomorphic invariance. Moreover
$$g_\D(\la',\la'')=m(\la',\la''):=|(\la'-\la'')/(1-\la'\overline{\la''})|,\
\la',\la''\in\D,$$ and $$A_\D(\la;X)=|X|\gamma(\la):=|X|/(1-|\la|^2),\
\la\in\D,\ X\in\C,$$ where by $\D$ we denote a unit disc in $\C$.

One can show that $g_G(a,\cdot)\in\exp\mathcal{L}_a$, $a\in G$, (see
\cite{Kli 2}). Consequently we obtain an equivalent and much more
useful definition of Azukawa pseudometric$$A_G(a;X)=\limsup_{0\neq\la\to
0}\frac{g_G(a,\la X)}{|\la|},\ a\in G,\ X\in\C^n.$$

Finally, let us introduce the following notation. By $\lf_G$ we denote the
Lempert function, by $k_G$ the Kobayashi pseudodistance and by $K_G$ the
Kobayashi-Royden pseudometric. We use also the convention: for a
function $f$ let $f^*$ denote $\tanh f$.

\section{Proof of main theorem}

Before we proceed to the proof, we need one easy but interesting lemma.

\begin{lem}\label{l1}
Let $G\subset\C^n$ be a domain such that $0\in G$ and let $\phi\in\O(\D,G)$ such
that $\phi(0)=0$. Then the following conditions are equivalent:
\begin{enumerate}[\rm (i)]
  \item there exists a $\la'\in\D\setminus\{0\}$
  such that $g_G(0,\phi(\la'))=|\la'|$;
  \item $g_G(0,\phi(\la))=|\la|$ for all $\la\in\D$;
  \item $A_G(0;\phi'(0))=1$.
\end{enumerate}
\end{lem}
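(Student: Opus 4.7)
The plan is to encode all three conditions as properties of a single subharmonic function of $\la\in\D$. Since $g_G(0,\cdot)\in\exp\mathcal{L}_0$ (recalled in Section~2), the map $\log g_G(0,\phi(\cdot))$ is subharmonic on $\D$, and the holomorphic contractibility gives $g_G(0,\phi(\la))\le g_\D(0,\la)=|\la|$. Hence
\[
 u(\la):=\log\bigl(g_G(0,\phi(\la))/|\la|\bigr),\qquad \la\in\D\setminus\{0\},
\]
is subharmonic on $\D\setminus\{0\}$ and bounded above by $0$; being bounded above, it extends to a subharmonic function $\tilde u$ on $\D$ with $\tilde u(0)=\limsup_{\la\to 0}u(\la)$.

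The equivalence (i)$\Leftrightarrow$(ii) then falls out of the maximum principle: (i) says $\tilde u$ attains its global supremum $0$ at an interior point $\la'\ne 0$, so $\tilde u\equiv 0$, which is (ii); the converse is trivial. For (ii)$\Leftrightarrow$(iii) I would invoke the identification
\[
 \lim_{0\ne\la\to 0}\frac{g_G(0,\phi(\la))}{|\la|}=A_G(0;\phi'(0)),
\]
so that $\tilde u(0)=\log A_G(0;\phi'(0))$, whence $\tilde u\equiv 0$ translates directly into $A_G(0;\phi'(0))=1$. The identification itself would be obtained by writing $\phi(\la)=\la\phi'(0)+\la^2\psi(\la)$ with $\psi\in\O(\D,\C^n)$ and comparing the extension value above with the equivalent formula $A_G(0;X)=\limsup_{\la\to 0}g_G(0,\la X)/|\la|$ from Section~2, using upper semi-continuity of $A_G(0;\cdot)$ together with $\phi(\la)/\la\to\phi'(0)$.

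The main obstacle is precisely this identification---the infinitesimal version of holomorphic contractibility of $g_G$. The maximum-principle part is routine, whereas showing that the two limsups along curves with the same tangent direction actually coincide relies on the (standard) semi-continuity properties of the Azukawa metric recorded in the cited references.
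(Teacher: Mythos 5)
Your overall architecture is sound, and for what it is worth the paper itself gives no details here --- it only points to the Carath\'eodory analogue, Proposition 8.1.3 in \cite{JarPfl} --- so your attempt is already more explicit than the published argument. The step (i)$\Leftrightarrow$(ii), via the removable-singularity theorem for subharmonic functions bounded above and the maximum principle, is correct and complete. The genuine gap is in the identification
\[
\limsup_{0\neq\la\to 0}\frac{g_G(0,\phi(\la))}{|\la|}=A_G(0;\phi'(0)),
\]
which you rightly isolate as the crux but propose to obtain from ``upper semi-continuity of $A_G(0;\cdot)$''. Semicontinuity can at best bound the left-hand side from \emph{above} by the right-hand side (and even that requires knowing that $A_G(0;X)$ coincides with the joint limsup of $g_G(0,\la Y)/|\la|$ as $(\la,Y)\to(0,X)$, not merely that the iterated directional limsup is upper semicontinuous in $X$). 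But the implication (iii)$\Rightarrow$(i),(ii) needs the \emph{reverse} inequality: that the limsup along the particular tangent curve $\phi$ is at least the limsup along the straight line $\mu\mapsto\mu\phi'(0)$. There is no chain rule for log-subharmonic functions, so this does not follow from $\phi(\la)/\la\to\phi'(0)$ together with any semicontinuity statement; as written, your argument delivers only (i)$\Leftrightarrow$(ii)$\Rightarrow$(iii).

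The identification is nevertheless true, and the cleanest repair is to upgrade your one-variable extension to two variables. Write $\phi(\la)=\la\phi'(0)+\la^{2}\psi(\la)$ and, for $|\la|,|\mu|<\d$ with $\mu\neq 0$ and $\d$ small enough, set
\[
v(\la,\mu):=\log g_G\bigl(0,\mu(\phi'(0)+\la\psi(\la))\bigr)-\log|\mu|,
\]
which is plurisubharmonic and bounded above by $\log(MC)$ because $g_G(0,z)\leq M\|z\|$ and $\|\mu(\phi'(0)+\la\psi(\la))\|\leq C|\mu|$; hence $v$ extends plurisubharmonically across $\{\mu=0\}$. The restriction of the extension to the diagonal $\{\la=\mu\}$ is a subharmonic extension of $\log\bigl(g_G(0,\phi(\la))/|\la|\bigr)$ through the puncture at $0$, while the restriction to $\{\la=0\}$ is a subharmonic extension of $\log\bigl(g_G(0,\mu\phi'(0))/|\mu|\bigr)$ through $0$. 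Since a subharmonic extension through an isolated puncture is unique and its value there equals the limsup of the function at the puncture, both restrictions force the same value at $(0,0)$: your $\tilde u(0)$ on one hand and $\log A_G(0;\phi'(0))$ on the other. With that lemma supplied, your proof closes.
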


\begin{proof}
The proof is similar to the proof of analogous theorem for complex
Carath\'eodory and Carath\'eodory-Reiffen geodesics (see Proposition 8.1.3 in
\cite{JarPfl}).
\end{proof}

\begin{proof}[Proof of Theorem \ref{tw1}]
Using biholomorphic invariance of $g_G$ and $A_G$ we may assume $a=0$ and
$F(a)=0$. Observe that $A_{G_1}(0;X)>0$ for $X\neq 0$. Indeed, using (4),
Proposition 3.18 from \cite{Pan}, and tautness of $G_1$ we obtain
\begin{multline*}
A_{G_1}(0;X)=\limsup_{0\neq\la\to 0}\frac{g_{G_1}(0,\la
X)}{|\la|}=\limsup_{0\neq\la\to 0}\frac{\lf^*_{G_1}(0,\la
X)}{|\la|}\\ =\limsup_{0\neq\la\to 0}\frac{\lf_{G_1}(0,\la X)}{|\la|}
=K_{G_1}(0;X)>0.
\end{multline*}

Now, from (5) we get $A_{G_2}(0;F'(0)X)>0$, $X\neq 0$. Thus $F'(0)$ is an
isomorphism and so $F$ is injective in a neighborhood $U$ of zero. From tautness
of $G_1$ we get equality between the euclidean topology 
of $G_1$ and its Kobayashi topology (see \cite{JarPfl} Proposition 3.3.4). Using (5) we may
assume $U\supset B_{g_{G_1}}(r_0):=\{z\in G_1:\ g_{G_1}(0,z)<r_0\}$ for some
$r_0\in(0,1)$.

We show that $g_{G_2}(0,F(z))=g_{G_1}(0,z)$ for $z\in G_1$. Fix a $z_0\in
G_1\setminus\{0\}$. Since $G_1$ is taut, there exists an extremal disc
$\phi\in\O(\D,G_1)$ for the pair $(0,z_0)$, i.e. $\phi(0)=0$, $\phi(\la_0)=z_0$,
 and $\lf_{G_1}^*(0,\phi(\la_0))=|\la_0|$ for some $\la_0\in(0,1)$. Thus
$g_{G_1}(0,\phi(\la_0))=|\la_0|$ and from Lemma \ref{l1} we obtain
$A_{G_1}(0;\phi'(0))=1$. From (5) we get
$A_{G_2}(F\circ\phi;(F\circ\phi)'(0))=1$. Thus once again from Lemma \ref{l1}
we get $g_{G_2}((F\circ\phi)(0),(F\circ\phi)(\la))=|\la|$ for any $\la\in\D$.
Hence $$g_{G_2}(0,F(z_0))=|\la_0|=g_{G_1}(0,z_0).$$Consequently,
$F^{-1}(B_{g_{G_2}}(r))\subset B_{g_{G_1}}(r)$ and
$F(B_{g_{G_1}}(r))\subset B_{g_{G_2}}(r)$, $r\in(0,1)$.

Now take $\{z_j\}\subset G_1$ such that $z_j\to\partial G_1$. Then
$g_{G_1}(0,z_j)\to 1$. Hence $g_{G_2}(0,F(z_j))\to 1$. Thus $F(z_j)\to
\partial G_2$. Therefore $F$ is a proper holomorphic map $G_1\to G_2$. In
particular, $F$ is surjective. Consequently, 
$F(B_{g_{G_1}}(r))=B_{g_{G_2}}(r)$ and
$F^{-1}(B_{g_{G_2}}(r))=B_{g_{G_1}}(r)$, $r\in(0,1)$.

Define $V:=\{z\in G_1:J_F(z)=0\}$. Then (see \cite{Rud}, Chapter 15.1)
\begin{itemize}
  \item $F(V)$ is an analytic subset of $G_2$;
  \item there exists an $m\in\mathbb{N}$ such that
  $\sharp F^{-1}(w)=m$ for $w\not\in F(V)$ and $\sharp
  F^{-1}(w)<m$ for $w\in F(V)$;
  \item $F:G_1\setminus F^{-1}(F(V))\to G_2\setminus F(V)$ is a holomorphic
  covering map.
\end{itemize}
Since $F:B_{g_{G_1}}(r_0)\to B_{g_{G_2}}(r_0)$ is biholomorphic, we conclude
that $m=1$. Thus $F$ is a biholomorphism.
\end{proof}

\end{document}